\DeclareSymbolFont{rsfscript}{OMS}{rsfs}{m}{n}
\DeclareSymbolFontAlphabet{\mathrsfs}{rsfscript}    
\renewcommand{\mathcal}{\mathrsfs}
\newcounter{ctheorem}[section]                   
\newtheorem{lemma}[ctheorem]{Lemma}              
\newtheorem{thm}[ctheorem]{Theorem}              
\newtheorem{defin}[ctheorem]{Definition}         
\theoremstyle{definition}
\makeatletter \@addtoreset{ctheorem}{chapter} \makeatother  
\makeatletter \@addtoreset{equation}{section} \makeatother  
\def\roundkern#1#2#3#4{\copy255 \kern-#1\wd255 \vrule width #4\wd255
    height #2\ht255 depth #3\ht255 \kern#1\wd255}
\def\straightletter#1{{\mathord{\rm I\mkern-3.6mu #1}}}
\def\R{\straightletter{R}}
\def\Z{\mathord{\rm Z\mkern-6.1mu Z}}
\newcommand{\order}{\CMcal{O}}                  
\newcommand{\nodes}{\CMcal{A}}                  
\newcommand{\To}{\rightarrow}                   
\newcommand{\dotz}[2]{#1,\dotsc,#2}             
\newcommand{\eps}{\varepsilon}                  
\newcommand{\h}[1]{\widehat{#1}\,}              
\newcommand{\cdotc}{\,\cdot\,}                  
\newcommand{\dx}{\mathrm{d}x}                   
\newcommand{\dy}{\mathrm{d}y}                   
\newcommand{\dt}{\mathrm{d}t}                   
\newcommand{\restrict}[1]{\!\!\mid_{#1}}        
\newcommand{\Ndash}{\nobreakdash\textendash}    
\newcommand{\Mdash}{\nobreakdash\textemdash}    
\DeclareMathOperator{\support}{supp}            
\newcommand{\supp}[1]{\support{#1}}
\newcommand{\set}[1]{\{#1\}}                    
\newcommand{\abs}[1]{\lvert#1\rvert}            
\newcommand{\norm}[1]{\lVert#1\rVert}           
\newcommand{\biggabs}[1]{\biggl\lvert#1\biggr\rvert}
\begin{document}
\begin{center}
\Large{\textbf{Error estimates for interpolation of rough data
using the scattered shifts of a radial basis function}}
\end{center}
\medskip
\begin{center}
R. A. Brownlee\footnote{This author was supported by a studentship from the Engineering and Physical Sciences Research Council.} \\
Department of Mathematics, University of Leicester, Leicester LE1
7RH, England
\end{center}
\medskip
%
%

\begin{abstract}
The error between appropriately smooth functions and their radial
basis function interpolants, as the interpolation points fill out a
bounded domain in $\R^d$, is a well studied artifact. In all of
these cases, the analysis takes place in a natural function space
dictated by the choice of radial basis function{\Mdash}the native
space. The native space contains functions possessing a certain
amount of smoothness. This paper establishes error estimates when
the function being interpolated is conspicuously rough.
\end{abstract}
\medskip
\textbf{MSC2000:} 41A05, 41A25, 41A30, 41A63.\\
\textbf{Keywords:} Scattered data interpolation, radial basis
functions, error estimates, rough
functions.\\
\textbf{Short title:} Error estimates for interpolation of rough
data.

\newpage
%
%

\section{Introduction}

In this paper we are interested in interpolation of a finite
scattered data set $\nodes \subset \R^d$ by translates of a single
basis function. Of the differing set ups to this problem, the one
preferred in this paper is the following variational formulation.
Firstly, we require a space of continuous functions $\CMcal{Z}$
which carries a seminorm. The minimal norm interpolant to $f:\nodes
\To \R$ on $\nodes$ from $\CMcal{Z}$ is the function $Sf \in
\CMcal{Z}$ which agrees with $f$ on $\nodes$ and has smallest
seminorm amongst all other interpolants to $f$ on $\nodes$ from
$\CMcal{Z}$. The particular space we shall be concerned with is
\begin{equation*}
    \CMcal{Z}^m(\R^d) := \biggl\{f \in \mathrsfs{S}^\prime:\ \h{D^\alpha f} \in
    L_{1,{\text{loc}}}(\R^d),\ \int_{\R^d} w(x) \abs{(\h{D^\alpha
    f})(x)}^2\, \dx < \infty,\ \abs{\alpha}=m\biggr\},
\end{equation*}
which carries the seminorm
\begin{equation*}
    \abs{f}_m := \biggl( \sum_{\abs{\alpha}=m} c_\alpha
    \int_{\R^d} w(x) \abs{(\h{D^\alpha f})(x)}^2\, \dx\biggr)^{1/2},\qquad f \in \CMcal{Z}^m(\R^d).
\end{equation*}
The constants $c_\alpha$ are chosen so that $\sum_{\abs{\alpha}=m}
c_\alpha x^{2\alpha} = \abs{x}^{2m}$, for all $x\in\R^d$.  The
notation $\mathrsfs{S}'$ is used to denote the usual Schwartz space
of distributions. The space $\CMcal{Z}^m(\R^d)$ is christened the
\textit{native space}. The \textit{weight function} function $w:\R^d
\To \R$ is initially chosen to satisfy
\begin{list}{}{}
    \item[(W0)] $w \in C(\R^d\setminus 0)$;
    \item[(W1)] $w(x)>0$ if $x \neq0$;
    \item[(W2)] $1/w \in L_{1,{\text{loc}}}(\R^d)$;
    \item[(W3)] there is a positive $\mu \in \R$ such that
    $(w(x))^{-1}=\order(\abs{x}^{-2\mu})$ as $\abs{x} \To \infty$.
\end{list}{}{}
A consequence of (W0){\Ndash}(W3) is that $\CMcal{Z}^m(\R^d)$ is
complete with respect to $\abs{\cdotc}_m$, and if $m+\mu-d/2>0$ then
$\CMcal{Z}^m(\R^d)$ is embedded in the continuous functions (see
\cite{wayne2}). As the title of this work suggests, we expect this
set up to admit minimal norm interpolants of the form
\begin{equation}\label{primary part of interpolant}
(Sf)(x) := \sum_{a \in \nodes} b_a \psi(x-a),\qquad \mbox{for $x \in
\R^d$,}
\end{equation}
for an appropriate \textit{basis function} $\psi$. We are not
disappointed, but for brevity we omit the details which are well
presented in~\cite{wayne2}. The coefficients $b_a$ in~\eqref{primary
part of interpolant} are determined by the interpolation equations
$(Sf)(a)=f(a)$, $a \in \nodes$. In some situations it may be
necessary to append a polynomial $p$ onto~\eqref{primary part of
interpolant} and take up the ensuing extra degrees of freedom by
satisfying the side conditions:
\begin{equation*}
\sum_{a \in \nodes} b_a q(a) = 0,
\end{equation*}
whenever $q$ is a polynomial of the same degree (or less) as $p$.
The archetypal scenario the author has in mind is
$w(x)=\abs{x}^{2\mu}$ for $x \in \R^d$, where $\mu < d/2$. This
leads to minimal norm interpolants of the form~\eqref{primary part
of interpolant} modulo a polynomial of degree $m$. Here, the
\textit{radial basis function} is $\psi:x\mapsto
\abs{x}^{2m+2\mu-d}\ \log{\abs{x}}$ if $2m+2\mu-d$ is an even
integer or $\psi:x\mapsto\abs{x}^{2m+2\mu-d}$ otherwise.

It is of central importance to understand the behaviour of the error
between a function $f:\Omega \To \R$ and its interpolant as the set
$\nodes$ becomes dense in a bounded domain $\Omega$. The measure of
density we employ is the {\it fill-distance} $h:=\sup_{x \in
\overline{\Omega}} \min_{a \in \nodes} \abs{x-a}$. One finds that
there is a positive constant $\gamma(m)$, independent of $h$, such
that for all $f \in \CMcal{Z}^m(\R^d)$,
\begin{equation*}
\norm{f-Sf}_{L_2(\Omega)} = \order{(h^{\gamma(m)})},\qquad \mbox{as
$h \To 0$}.
\end{equation*}
It is natural to ask: what happens if the function being
approximated does not lie in $\CMcal{Z}^m(\R^d)$? It may well be
that $f$ lies in $\CMcal{Z}^k(\R^d)$, where $k<m$ and $k+\mu-d/2>0$.
The condition $k+\mu-d/2>0$ ensures that $f(a)$ exists for each
$a\in\nodes$, so $Sf$ certainly exists. It is tempting to conjecture
that the new error estimate should be
\begin{equation*}
\norm{f-Sf}_{L_2(\Omega)} = \order{(h^{\gamma(k)})},\qquad \mbox{as
$h \To 0$}.
\end{equation*}
We are conjecturing the same approximation order as if we had
instead approximated $f$ with the minimal norm interpolant to $f$ on
$\nodes$ from $\CMcal{Z}^k(\R^d)$. This is precisely what happens in
the case $w=1$, which was considered by Brownlee \& Light
in~\cite{brownlee1}. In this work, with the aid of a recent result
from~\cite{brownlee2} (Lemma~\ref{norm equivalence}), we employ the
technique used by Brownlee \& Light to extend their work to more
general weight functions. Theorem~\ref{main} is the definitive
result we obtain. The interested reader may enjoy consulting the
related papers~\cite{narcowich2, narcowich3, yoon1, yoon2}.

To close this section we introduce some notation that will be
employed throughout the paper. A domain is understood to be a
connected open set. The support of a function $\phi:\R^d \To \R$,
denoted by $\supp(\phi)$, is defined to be the closure of the set
$\set{x \in \R^d:\ \phi(x) \neq 0}$. We make much use of the linear
space $\Pi_m(\R^d)$ which consists of all polynomials of degree at
most $m$ in $d$ variables. We fix $\ell$ as the dimension of this
space. Finally, when we write $\h{f}$ we mean the Fourier transform
of $f$. The context will clarify whether the Fourier transform is
the natural one on $L_1(\R^d)$, $\widehat{f}(x) :=
(2\pi)^{-d/2}\int_{\R^d} f(t) \mathrm{e}^{-\mathrm{i}xt}\, \dt$, or
one of its several extensions to $L_2(\R^d)$ or $\mathrsfs{S}'$.

%
%

\section{Extension theorems}

In this section we gather a number of useful results, chiefly about
the sorts of extensions which can be carried out on our native
spaces. This will first require us to establish the notion of local
native spaces. To do this, we rewrite the seminorm $\abs{f}_m$ in
its \textit{direct form}{\Mdash}that is, without the Fourier
transform of $f$ appearing explicitly. Let us demand that $w$
satisfies the following additional axioms:
\begin{list}{}{}
    \item[(W4)] $w(y)=w(-y)$  for all $ y \in \R^d $;
    \item[(W5)] $ w(0)=0 $ and $\h{w}(x)\leq 0$ for
    almost all $x \in \R^d $;
    \item[(W6)] $\h{w}$ is a measurable function and for
    any neighbourhood $N$ of
    the origin, $\h{w} \in L^1(\R^d\setminus N)$;
    \item[(W7)] $\abs{\h{w}(y)}=\order(\abs{y}^\lambda)$ as $y \rightarrow 0 $ , where
    $\lambda+d+2>0$.
\end{list}{}{}
Armed with axioms (W1) and (W4){\Ndash}W7) it follows from
\cite{levesleylight} that
\begin{equation}\label{direct form}
\abs{f}_m^2=-\frac{1}{2} \sum_{\abs{\alpha}=m} c_\alpha
    \int_{\R^d} \int_{\R^d} \h{w}(x-y) \abs{(D^\alpha f)(x)-(D^\alpha
    f)(y)}^2\, \dx\dy,\qquad f \in \CMcal{Z}^m(\R^d).
\end{equation}
The notation $C^m_0(\R^d)$ is used for the space of compactly
supported $m$\nobreakdash-times continuously differentiable
functions on $\R^d$. Now, let us define the following space for a
domain $\Omega \subset \R^d$,
\begin{equation*}
    X^m(\Omega):=\Bigl\{f\restrict{\Omega}:\ f \in C^m_0(\R^d),\
    \abs{f}_{m,\Omega}<\infty\Bigr\},
\end{equation*}
where
\begin{equation*}
    \abs{f}_{m,\Omega}:=\biggl( -\frac{1}{2} \sum_{\abs{\alpha}=m} c_\alpha
    \int_\Omega \int_\Omega \h{w}(x-y) \abs{(D^\alpha f)(x)-(D^\alpha
    f)(y)}^2\, \dx\dy \biggr)^{1/2},\qquad f \in X^m(\Omega).
\end{equation*}
A norm is placed on $X^m(\Omega)$ via
\begin{equation*}
    \norm{f}_{m,\Omega}:= \Bigl(
    \norm{f}_{W^m_2(\Omega)}^2+\abs{f}_{m,\Omega}^2\Bigr)^{1/2},\qquad
    f \in X^m(\Omega).
\end{equation*}
The notation $\CMcal{X}^m(\Omega)$ denotes the completion of
$X^m(\Omega)$ with respect to $\norm{\cdotc}_{m,\Omega}$, while
$\CMcal{Y}^m(\Omega)$ denotes the completion of $X^m(\Omega)$ with
respect to $\abs{\cdotc}_{m,\Omega}$. It is these spaces that we
call the \textit{local native spaces}.

We are nearly ready to state our first extension theorem, but first
it is necessary to take on board four additional axioms and
introduce an important type of bounded domain:

\begin{defin}
Let $\Omega_1$ and $\Omega_2$ be domains in $\R^d$, and $\Phi$ a
bijection from $\Omega_1$ to $\Omega_2$.  We say that $\Phi$ is
$m$-smooth if, writing
$\Phi(x)=(\dotz{\phi_1(\dotz{x_1}{x_d})}{\phi_d(\dotz{x_1}{x_d})})$
and $\Phi^{-1}(x) = \Psi(x)
=(\dotz{\psi_1(\dotz{x_1}{x_d})}{\psi_d(\dotz{x_1}{x_d})})$, then
the functions $\dotz{\phi_1}{\phi_d}$ belong to
$C^m(\overline{\Omega}_1)$ and $\dotz{\psi_1}{\psi_d}$ belong to
$C^m(\overline{\Omega}_2)$. Let $\Phi$ be a bijection from $\R^d$ to
$\R^d$. We say $\Phi$ is locally $m$-smooth if $\Phi$ is $m$-smooth
on every bounded domain in $\R^d$.
\end{defin}

\begin{list}{}{}
    \item[(W8)] for every locally $(m+1)$-smooth map $\phi$ on $\R^d$, and every bounded subset $\Omega$ of $\R^d$,
    there is a $C_1>0$ such that $\h{w}(\phi(x) - \phi(y)) \leq C_1 \h{w}(x-y)$,
    for all $x,y \in \Omega$;
    \item[(W9)] there exists a constant $C_2>0$ such that if $x=(x',x_d) \in \R^d$ and $y=(x',y_d) \in \R^d$
    with $\abs{x_d} \geq \abs{y_d}$, then $\h{w}(x) \leq C_2 \h{w}(y)$.
    \item[(W10)] $\int_A \h{w}<0$ whenever $A$ has positive
    measure;
    \item[(W11)] $\h{w}(y)=\h{w}(-y)$ for all $y \in \R^d$.
\end{list}{}{}

\begin{defin}
Let $B= \set{(y_1,y_2,\ldots,y_d) \in \R^d:\ \abs{y_j}<1,\ 1\leq j
\leq d}$, and set $B_+= \set{y\in B:\ y=(y', y_d)\ \text{and}\ y_d
>0}$ and $B_0= \set{y \in B:\ y=(y',y_n)\ \text{and}\ y_n =0}$.
A bounded convex domain $\Omega$ in $\R^d$ with boundary $\partial
\Omega$ will be called a V-domain if the following all hold:
\begin{list}{}{}
\item[(A1)] there exist open sets $\dotz{G_1}{G_N} \subset \R^d$
such that $\partial \Omega \subset \bigcup_{j=1}^N G_j;$
\item[(A2)] there exist locally (m+1)-smooth maps $\phi_j : \R^d
\to \R^d$ such that $\phi_j(B) = G_j$, $\phi_j(B_+)=G_j \cap \Omega$
and $\phi_j(B_0)=G_j \cap \partial \Omega$, $j=1,\ldots,N$;
\item[(A3)] let $\Omega_{\delta}$ be the set of
all points in $\Omega$ whose distance from $\partial \Omega$ is less
than $\delta$.  Then for some $\delta>0$,
\begin{equation*}
\Omega_\delta \subset \bigcup_{j=1}^N \phi_j \biggl(
\biggl\{(y_1,y_2,\ldots,y_d) \in \R^d : \abs{y_j}<\frac{1}{m+1},\;
1\leq j \leq d \biggr\}\biggr).
\end{equation*}
\end{list}{}{}
\end{defin}

The definition of a V-domain is taken from a paper by Light \&
Vail~\cite{vail} in which extension theorems for our local native
spaces are considered.

\begin{thm}[Light \& Vail~\cite{vail}]\label{vail norm exten thm}
Let $\Omega \subset \R^d$ be a V-domain. Let $\h{w}:\R^d \To \R$
satisfy (W6){\Ndash}(W11). Then there exists a continuous linear
operator $L:\CMcal{X}^m(\Omega)\To\CMcal{X}^m(\R^d)$ such that for
all $f \in \CMcal{X}^m(\Omega)$,
\begin{enumerate}
    \item $Lf=f$ on $\Omega$;
    \item $\supp(Lf)$ is compact and independent of $f$;
    \item $\norm{Lf}_{m,\R^d} \leq K \norm{f}_{m,\Omega}$, for
    some positive constant $K=K(\Omega)$ independent of $f$.
\end{enumerate}
\end{thm}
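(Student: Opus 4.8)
The plan is to follow the classical localisation--reflection--patching scheme for Sobolev extension operators (in the spirit of Calder\'on, Stein and Seeley), but to carry the weighted double-integral seminorm $\abs{\cdotc}_{m,\Omega}$ through every step with the help of axioms (W8)--(W11). Since $X^m(\Omega)$ is dense in $\CMcal{X}^m(\Omega)$ by construction, it suffices to define $L$ on the restrictions to $\Omega$ of functions $f \in C^m_0(\R^d)$ and to establish the bound in item~3 there; the operator then extends to the whole completion by continuity, and items~1 and~2 survive the passage to the limit. Throughout, I would fix a smooth cutoff equal to $1$ on a neighbourhood of $\overline{\Omega}$ and multiply the final construction by it, so that $\supp(Lf)$ is compact and independent of $f$.

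First I would take the cover of $\partial\Omega$ by the sets $\dotz{G_1}{G_N}$ supplied by (A1), adjoin an interior set $G_0$ with $\overline{G_0}\subset\Omega$ so that $\dotz{G_0}{G_N}$ covers $\overline{\Omega}$, and choose a smooth partition of unity $\set{\chi_j}_{j=0}^N$ subordinate to it. The interior piece $\chi_0 f$ is extended by zero, so attention concentrates on each boundary piece $\chi_j f$. Using the chart $\phi_j$ from (A2), which is locally $(m+1)$-smooth, I would pull $\chi_j f$ back to the half-cube $B_+$ by forming $g_j := (\chi_j f)\circ\phi_j$ on $B_+$ and apply the higher-order reflection
\begin{equation*}
(Rg)(x',x_d) := \begin{cases} g(x',x_d), & x_d \geq 0,\\ \sum_{k=1}^{m+1} c_k\, g(x',-k x_d), & x_d<0, \end{cases}
\end{equation*}
where the $c_k$ solve the Vandermonde system $\sum_{k=1}^{m+1} c_k (-k)^j = 1$ for $j=\dotz{0}{m}$, so that $Rg \in C^m(B)$ whenever $g \in C^m(\overline{B_+})$. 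Transporting $Rg_j$ back to $G_j$ by $\phi_j^{-1}$, summing over $j$, and adding the interior term yields a $C^m$ extension agreeing with $f$ on $\Omega$; the global cutoff then secures item~2.

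The substance lies in item~3, which splits into two contributions. The Sobolev part $\norm{Lf}_{W^m_2(\R^d)} \leq C\norm{f}_{W^m_2(\Omega)}$ is the classical reflection estimate and requires only that the charts are $C^{m+1}$ and the $c_k$ bounded; the commutator terms in which derivatives fall on the cutoffs $\chi_j$ are of order below $m$ and are absorbed by this Sobolev part of the norm. The delicate contribution is the seminorm $\abs{Lf}_{m,\R^d}^2$, a double integral weighted by $\h{w}(x-y)$ against the squared differences of derivatives of order $m$. I would estimate it chart by chart, invoking (W11) to exploit the symmetry of $\h{w}$ and (W10) to ensure that each localised form is positive, so that the triangle inequality may be used when the chart contributions are recombined. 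Transporting the integral through $\phi_j$ replaces the weight argument $x-y$ by $\phi_j(x)-\phi_j(y)$, and (W8) is precisely the hypothesis that dominates $\h{w}(\phi_j(x)-\phi_j(y))$ by a constant multiple of $\h{w}(x-y)$; the $(m+1)$-smoothness of $\phi_j$ simultaneously controls, via the chain rule and a mean-value argument, the distortion of the derivative differences $(D^\alpha g)(x)-(D^\alpha g)(y)$.

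The main obstacle, and the step where (W9) does the real work, is the reflection estimate on the cube itself. Splitting $B\times B$ into the four blocks determined by the signs of the last coordinates (writing $B_-$ for the lower half-cube), the block $B_+\times B_+$ is harmless, since there $Rg_j=g_j$ and the integral is exactly a piece of the target bound. In the blocks meeting $B_-$, however, the reflection evaluates $g_j$ at points whose normal coordinate has been scaled by a factor $k\leq m+1$, so that after the natural change of variables one is confronted with $\h{w}$ at an argument whose normal component differs from the one occurring in the $B_+\times B_+$ integral. Axiom (W9)---monotonicity of $\h{w}$ in the normal direction with the tangential part held fixed---is exactly what bounds the one weight against the other and closes the mixed and lower blocks back against the integral over $B_+\times B_+$. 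Keeping track of the interplay between this monotonicity and the chain-rule factors coming from $\phi_j$, uniformly in the chart index $j$, is the technical heart of the proof; once it is in place, summing over $j$ and absorbing the cutoff derivatives produces the constant $K=K(\Omega)$ of item~3.
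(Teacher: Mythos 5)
The paper does not actually prove this theorem: it is imported wholesale from Light \& Vail \cite{vail}, and the text explicitly sends the reader there for the construction, so there is no internal argument to compare yours against. On its own terms your skeleton --- partition of unity subordinate to $G_0,\dotz{G_1}{G_N}$, pull-back by the charts $\phi_j$, higher-order reflection across $B_0$, push-forward, global cutoff, then extension by density from $X^m(\Omega)$ to $\CMcal{X}^m(\Omega)$ --- is surely the intended one: condition (A3), which shrinks the charts to the cubes $\abs{y_j}<1/(m+1)$, is exactly what a reflection $\sum_{k=1}^{m+1}c_k\,g(x',-kx_d)$ needs so that the reflected points stay inside $B_+$, and (W8) and (W9) are visibly tailored to the chart transport and to the normal rescaling, respectively.

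Two of the steps you assert, however, conceal the real work, and as written the sketch does not close. First, the localisation: because $\abs{\cdotc}_{m,\Omega}$ is a nonlocal double integral, multiplying by $\chi_j$ does not merely produce ``lower-order commutators absorbed by the Sobolev part of the norm.'' Leibniz's rule leaves terms of the form
\begin{equation*}
-\int_{\R^d}\int_{\R^d} \h{w}(x-y)\,\bigabs{(D^\beta\chi_j)(x)(D^\gamma f)(x)-(D^\beta\chi_j)(y)(D^\gamma f)(y)}^2\,\dx\,\dy,\qquad \abs{\gamma}<m,
\end{equation*}
and these are controlled by $\norm{f}_{W^m_2(\Omega)}$ only after one proves that a compactly supported $C^1$ function has finite $\h{w}$-seminorm; that requires the local integrability of $\abs{y}^2\h{w}(y)$ near the origin (which is what (W7), $\lambda+d+2>0$, buys --- note $\h{w}$ itself need not be locally integrable) combined with a mean-value estimate, none of which appears in your text. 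Second, the reflection estimate: expanding $\abs{(Rg)(x)-(Rg)(y)}^2$ for $x,y\in B_-$ produces cross terms indexed by pairs $k\neq k'$, and after the change of variables the weight is evaluated at normal argument $v_d/k'-u_d/k$, which is \emph{not} dominated in absolute value by $u_d-v_d$ (take $u_d=v_d$), so (W9) does not apply to it in the direction you need. One must first regroup --- for instance insert $g(y',-ky_d)$ and handle the purely normal increment $g(y',-ky_d)-g(y',-k'y_d)$ separately --- and that term again collides with the possible non-integrability of $\h{w}$ at the origin unless the (W7) cancellation is exploited. You correctly flag this as the technical heart, but flagging it is not carrying it out, and it is precisely the part for which the paper defers to \cite{vail}.
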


A feature of the construction of the extension operator in
Theorem~\ref{vail norm exten thm} is that $Lf$ can be chosen to be
supported on any compact subset of $\R^d$ containing $\Omega$. For
details of the construction, the reader should consult~\cite{vail}.
Also at our disposal is a seminorm version of Theorem~\ref{vail norm
exten thm}:

\begin{thm}[Light \& Vail~\cite{vail}]\label{vail norm exten thm 2}
Let $\Omega \subset \R^d$ be a V-domain. Let $\h{w}:\R^d \To \R$
satisfy (W6){\Ndash}(W11). Given $f \in \CMcal{Y}^m(\Omega)$, there
exists a function $f^\Omega \in \CMcal{Y}^m(\R^d)$ such that:
\begin{enumerate}
    \item $f^\Omega=f$ on $\Omega$;
    \item $\abs{f^\Omega}_{m,\R^d} \leq C \abs{f}_{m,\Omega}$, for
    some positive constant $C=C(\Omega)$ independent of $f$.
\end{enumerate}
\end{thm}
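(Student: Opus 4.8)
The plan is to bootstrap the seminorm extension from the norm extension of Theorem~\ref{vail norm exten thm} by first stripping off the low-order part of $f$ with a polynomial. The obstruction to applying Theorem~\ref{vail norm exten thm} directly is that its bound controls the extension by the \emph{full} norm $\norm{f}_{m,\Omega}$, which sees the entire $W^m_2(\Omega)$-norm of $f$ and not merely its seminorm. Since $\abs{\cdotc}_{m,\Omega}$ annihilates $\Pi_{m-1}(\R^d)$, the idea is to subtract from $f$ a suitable polynomial so that the remainder has full norm comparable to the original seminorm.

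I would argue as follows. By density it suffices to treat $f \in X^m(\Omega)$ and then pass to the limit, so fix such an $f$. Let $P:W^m_2(\Omega) \To \Pi_{m-1}(\R^d)$ be the orthogonal projection and put $p = Pf$. The core analytic step is a Bramble--Hilbert (Deny--Lions) inequality in the form
\begin{equation*}
\norm{f-p}_{W^m_2(\Omega)} \leq C_P\, \abs{f}_{m,\Omega},
\end{equation*}
which I would obtain by combining the classical Poincar\'e inequality modulo $\Pi_{m-1}(\R^d)$ with the norm-equivalence Lemma~\ref{norm equivalence} relating $\abs{\cdotc}_{m,\Omega}$ to the standard Sobolev seminorm on the bounded domain $\Omega$. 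Because $\abs{f-p}_{m,\Omega}=\abs{f}_{m,\Omega}$, this yields $\norm{f-p}_{m,\Omega} \leq C'\abs{f}_{m,\Omega}$, and, after truncating $p$ by a fixed cutoff identically one on a neighbourhood of $\overline{\Omega}$, one checks $f-p \in X^m(\Omega)$.

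Next I would apply Theorem~\ref{vail norm exten thm} to $f-p$ to produce $L(f-p) \in \CMcal{X}^m(\R^d)$ with $L(f-p)=f-p$ on $\Omega$ and $\norm{L(f-p)}_{m,\R^d} \leq K\norm{f-p}_{m,\Omega}$, and then set
\begin{equation*}
f^\Omega := L(f-p)+p.
\end{equation*}
Property~(1) is immediate since $L(f-p)+p=f$ on $\Omega$. For~(2), observe that $p \in \Pi_{m-1}(\R^d)$ contributes nothing to the top-order derivatives, so $\abs{f^\Omega}_{m,\R^d}=\abs{L(f-p)}_{m,\R^d}\leq\norm{L(f-p)}_{m,\R^d}\leq K C'\abs{f}_{m,\Omega}$, which is the desired estimate with $C=KC'$.

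Finally I would remove the assumption $f \in X^m(\Omega)$. Since $P$ and $L$ are linear, the map $f \mapsto f^\Omega$ is linear and obeys the uniform bound just established; hence for a general $f \in \CMcal{Y}^m(\Omega)$ and an approximating sequence $f_n \in X^m(\Omega)$ with $\abs{f_n-f}_{m,\Omega}\To 0$, the sequence $f_n^\Omega$ is Cauchy in $\abs{\cdotc}_{m,\R^d}$ and its limit $f^\Omega \in \CMcal{Y}^m(\R^d)$ inherits both properties, using that restriction to $\Omega$ is continuous from $\CMcal{Y}^m(\R^d)$ to $\CMcal{Y}^m(\Omega)$ (which holds because $-\h{w}\geq 0$ forces $\abs{\cdotc}_{m,\Omega}\leq\abs{\cdotc}_{m,\R^d}$). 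The main obstacle I anticipate is the Bramble--Hilbert estimate: it is exactly here that the behaviour of $\h{w}$ near the origin enters, through Lemma~\ref{norm equivalence}, to guarantee that the weighted seminorm $\abs{\cdotc}_{m,\Omega}$ genuinely dominates the standard $W^m_2$-seminorm modulo polynomials. A subtler bookkeeping point is confirming that $f^\Omega=L(f-p)+p$, which is not in $W^m_2(\R^d)$, is nonetheless a legitimate element of the seminorm completion $\CMcal{Y}^m(\R^d)$.
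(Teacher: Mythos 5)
The paper gives no proof of this theorem---it is quoted from Light \& Vail---so your argument can only be assessed on its own terms. Your overall strategy (subtract a polynomial to convert the seminorm bound into a full-norm bound, extend with Theorem~\ref{vail norm exten thm}, add the polynomial back) is the right one, and is exactly the device the paper itself uses inside the proof of Lemma~\ref{poly extention}. But the pivotal inequality is wrong as stated. The null space of $\abs{\cdotc}_{m,\Omega}$ is $\Pi_m(\R^d)$, not $\Pi_{m-1}(\R^d)$: the seminorm is built from \emph{differences} $(D^\alpha f)(x)-(D^\alpha f)(y)$ with $\abs{\alpha}=m$, so it vanishes on every polynomial of degree $m$. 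Consequently the proposed Bramble--Hilbert estimate $\norm{f-p}_{W^m_2(\Omega)} \leq C_P\, \abs{f}_{m,\Omega}$ with $p$ the projection onto $\Pi_{m-1}(\R^d)$ is false: for $f(x)=x_1^m$ the right-hand side is zero, while the left-hand side is bounded below by $\norm{D^{(m,0,\dotsc,0)}(f-p)}_{L_2(\Omega)}=m!\,\vol{\Omega}^{1/2}>0$ for every $p\in\Pi_{m-1}(\R^d)$. No Poincar\'e inequality modulo $\Pi_{m-1}(\R^d)$ can bridge this, because $\abs{\cdotc}_{m,\Omega}$ does not dominate the classical $W^m_2$-seminorm (their kernels differ). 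Note also that Lemma~\ref{norm equivalence} does not ``relate $\abs{\cdotc}_{m,\Omega}$ to the standard Sobolev seminorm''; it compares the full norm $\norm{\cdotc}_{m,\Omega}$ with $\abs{\cdotc}_{m,\Omega}$ augmented by point evaluations at $\Pi_m(\R^d)$-unisolvent points.

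The repair is to take $p$ from $\Pi_m(\R^d)$, and the cleanest choice is the Lagrange interpolant: fix $\dotz{b_1}{b_\ell}\in\Omega$ unisolvent with respect to $\Pi_m(\R^d)$ and let $p=P_b f$. Then $(f-p)(b_i)=0$ for all $i$, so Lemma~\ref{norm equivalence} gives directly $\norm{f-p}_{m,\Omega}\leq K_1^{-1}\norm{f-p}_{\Omega}=K_1^{-1}\abs{f-p}_{m,\Omega}=K_1^{-1}\abs{f}_{m,\Omega}$, which is exactly the bound your Bramble--Hilbert step was meant to deliver. From there your argument goes through: extend $f-p$ by Theorem~\ref{vail norm exten thm}, set $f^\Omega=L(f-p)+p$, and use that $p\in\Pi_m(\R^d)$ contributes nothing to $\abs{\cdotc}_{m,\R^d}$. (This is precisely how the paper argues in Lemma~\ref{poly extention}.) The remaining bookkeeping you flag---that $\Pi_m(\R^d)$ sits inside $\CMcal{Y}^m(\R^d)$ as the kernel of the seminorm, and the density passage from $X^m(\Omega)$ to $\CMcal{Y}^m(\Omega)$---is genuine but routine in this framework.
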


It is convenient for us to be able to work with a norm on
$\CMcal{X}^m(\Omega)$ that is equivalent to
$\norm{\cdotc}_{m,\Omega}$.

\begin{lemma}[Brownlee \& Levesley~\cite{brownlee2}]\label{norm equivalence}
    Let $\Omega \subset \R^d$ be a V-domain. Let $w:\R^d \To \R$
    satisfy (W0){\Ndash}(W12) and let $m+\mu-d/2>0$. Let
    $\dotz{b_1}{b_\ell} \in \Omega$ be unisolvent with respect to
    $\Pi_m(\R^d)$. Define a norm on $\CMcal{X}^m(\Omega)$ via
        \begin{equation*}
            \norm{f}_\Omega := \biggl(
    \abs{f}_{m,\Omega}^2+\sum_{i=1}^\ell
            \abs{f(b_i)}^2 \biggr)^{1/2},\qquad f\in \CMcal{X}^m(\Omega).
        \end{equation*}
    There are positive constants $K_1$ and
    $K_2$ such that for all $f \in \CMcal{X}^m(\Omega)$,
    $K_1 \norm{f}_{m,\Omega} \leq \norm{f}_{\Omega}  \leq K_2
    \norm{f}_{m,\Omega}$.
\end{lemma}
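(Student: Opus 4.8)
The plan is to prove the two inequalities separately. Both rest on one structural fact, which I would isolate first: the local native space embeds continuously into the (in general fractional) Sobolev space $H^{m+\mu}(\Omega)$. To see this, take $f \in X^m(\Omega)$ and apply the norm extension operator $L$ of Theorem~\ref{vail norm exten thm}, so that $Lf \in \CMcal{X}^m(\R^d)$ has support in a fixed compact set and $\norm{Lf}_{m,\R^d} \leq K\norm{f}_{m,\Omega}$. Writing the global seminorm in Fourier form gives $\abs{Lf}_m^2 = \int_{\R^d} w(x)\abs{x}^{2m}\abs{\h{Lf}(x)}^2\,\dx$, and axiom (W3) supplies $R,c>0$ with $w(x) \geq c\abs{x}^{2\mu}$ for $\abs{x}>R$. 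Hence the high-frequency part of $\abs{Lf}_m^2$ controls $\int_{\abs{x}>R}\abs{x}^{2m+2\mu}\abs{\h{Lf}(x)}^2\,\dx$, while $\norm{Lf}_{W^m_2(\R^d)}^2$ controls the complementary low-frequency part; together they bound a multiple of $\int_{\R^d}(1+\abs{x}^2)^{m+\mu}\abs{\h{Lf}(x)}^2\,\dx$. Thus $\norm{f}_{H^{m+\mu}(\Omega)} \leq \norm{Lf}_{H^{m+\mu}(\R^d)} \leq C\norm{f}_{m,\Omega}$, and the bound passes to the completion $\CMcal{X}^m(\Omega)$. Since $m+\mu>d/2$ and $\Omega$ is a bounded domain with sufficiently regular boundary, $H^{m+\mu}(\Omega)$ embeds continuously into $C(\overline{\Omega})$ and compactly into both $W^m_2(\Omega)$ and $C(\overline{\Omega})$.

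With this in hand the upper estimate $\norm{f}_\Omega \leq K_2\norm{f}_{m,\Omega}$ is immediate: one has $\abs{f}_{m,\Omega} \leq \norm{f}_{m,\Omega}$ by definition, while the continuous embedding into $C(\overline{\Omega})$ bounds each evaluation functional, $\abs{f(b_i)} \leq c\norm{f}_{m,\Omega}$, so summing the finitely many terms gives $\sum_{i=1}^\ell \abs{f(b_i)}^2 \leq C\norm{f}_{m,\Omega}^2$.

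For the lower estimate, equivalent to $\norm{f}_{m,\Omega} \leq K_1^{-1}\norm{f}_\Omega$, I would argue by contradiction. If no such constant existed there would be a sequence $(f_n) \subset \CMcal{X}^m(\Omega)$ with $\norm{f_n}_{m,\Omega}=1$ yet $\norm{f_n}_\Omega \to 0$, so that both $\abs{f_n}_{m,\Omega} \to 0$ and $f_n(b_i)\to 0$ for each $i$. Being bounded in $\CMcal{X}^m(\Omega)$, the sequence is precompact in $C(\overline{\Omega})$ and in $W^m_2(\Omega)$ by the preliminary step; passing to a subsequence, $f_n \to f$ uniformly on $\overline{\Omega}$ and in $W^m_2(\Omega)$. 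Uniform convergence forces $f(b_i) = \lim_n f_n(b_i)=0$ for every $i$. For the seminorm, the integrand $-\h{w}(x-y)\abs{(D^\alpha f)(x)-(D^\alpha f)(y)}^2$ is nonnegative by (W5), so Fatou's lemma, applied along a further subsequence whose $m$th derivatives converge almost everywhere, gives $\abs{f}_{m,\Omega} \leq \liminf_n \abs{f_n}_{m,\Omega}=0$.

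It remains to identify $f$. By (W10) one has $\h{w}<0$ almost everywhere, so a vanishing seminorm forces each $D^\alpha f$ with $\abs{\alpha}=m$ to be constant on the connected set $\Omega$; hence $f$ agrees on $\Omega$ with a member of $\Pi_m(\R^d)$. Combined with $f(b_i)=0$ for all $i$ and the unisolvency of $\set{\dotz{b_1}{b_\ell}}$ with respect to $\Pi_m(\R^d)$, this yields $f=0$. But then $\norm{f_n}_{W^m_2(\Omega)} \to 0$ together with $\abs{f_n}_{m,\Omega} \to 0$ gives $\norm{f_n}_{m,\Omega} \to 0$, contradicting $\norm{f_n}_{m,\Omega}=1$. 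I expect the main obstacle to be the preliminary embedding of the first paragraph: the whole argument depends on converting the weighted Fourier-side control carried by $\abs{\cdotc}_{m,\Omega}$, through the extension operator and the decay axiom (W3), into honest extra smoothness $H^{m+\mu}$ with $m+\mu>d/2$, which is exactly what renders the evaluation functionals bounded and the unit ball precompact.
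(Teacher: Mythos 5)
The paper offers no proof of this lemma: it is imported wholesale from Brownlee \& Levesley~\cite{brownlee2}, so there is nothing internal to compare your argument against line by line. Judged on its own terms, your proof is the standard compactness argument for equivalences of this kind (continuous embedding into $C(\overline{\Omega})$ for the upper bound; a Rellich-type precompactness, Fatou, and unisolvency argument for the lower bound), and it is essentially sound: the use of (W5) to make the integrand nonnegative for Fatou, and of (W10) to force $\h{w}<0$ almost everywhere and hence identify the null space of $\abs{\cdotc}_{m,\Omega}$ with $\Pi_m(\R^d)\restrict{\Omega}$, is exactly what those axioms are there for, and the unisolvency of $\dotz{b_1}{b_\ell}$ then kills the limit function. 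The one step you should not gloss is the very first: you pass from the direct-form seminorm $\abs{Lf}_{m,\R^d}$ --- which is how the norm on $\CMcal{X}^m(\R^d)$ is actually defined, as a completion of $X^m(\R^d)$ --- to the Fourier-side expression $\int_{\R^d} w(x)\abs{x}^{2m}\abs{\h{Lf}(x)}^2\,\dx$. That identity is~\eqref{direct form}, stated for members of $\CMcal{Z}^m(\R^d)$ under (W1) and (W4)--(W7); to invoke it for a general element of the completion $\CMcal{X}^m(\R^d)$ you must either show that this completion embeds in $\CMcal{Z}^m(\R^d)$ or run the whole estimate on the dense subspace $X^m(\R^d)$ and pass to the limit. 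This is routine but it is the hinge on which the embedding $\CMcal{X}^m(\Omega)\hookrightarrow H^{m+\mu}(\Omega)$, and therefore both halves of your proof, turns; as you yourself note, everything else is downstream of it.
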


The behaviour of the constant $K(\Omega)$ in the statement of
Theorem~\ref{vail norm exten thm} can be understood for simple
choices of $\Omega$. To realise this, we require that the weight
function satisfies one further and final axiom:
\begin{list}{}{}
    \item[(W12)] there exists $C_1,C_2>0$ such that $C_1 h^\lambda \h{w}(x) \leq \h{w}(h x)\leq C_2 h^{\lambda} \h{w}(x)$, for all $h>0$, $x \in \R^d$.
\end{list}{}{}
Now, an elementary change of variables gives us:

\begin{lemma}\label{CoV lemma 2}
    Let $\Omega$ be a measurable subset of $\R^d$. Let $w:\R^d \To \R$ be a
    measurable function that is nonpositive almost everywhere and satisfies (W11).
    Define the mapping
    $\sigma:\R^d\rightarrow\R^d$ by
    $\sigma(x)=a+h(x-t)$, where $h>0$, and
    $a$, $t$, $x \in \R^d$. Then there exists a constant $K_1,K_2>0$, independent of $\Omega$,
    such that for all $f \in \CMcal{Y}^m(\sigma(\Omega))$,
        \begin{equation*}
            K_1 \leq \frac{\abs{f \circ \sigma}_{m,\Omega}}{h^{m-\lambda/2-d} \abs{f}_{m,\sigma(\Omega)}} \leq K_2.
        \end{equation*}
\end{lemma}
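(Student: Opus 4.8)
The plan is to establish the scaling relation by a direct change of variables in the double integral defining $\abs{\cdotc}_{m,\Omega}$, carrying it out first for smooth functions and then passing to the completion by density. The map $\sigma(x)=a+h(x-t)$ is affine with linear part $hI_d$, so the chain rule gives, for every multi-index $\alpha$ with $\abs{\alpha}=m$,
\[
    D^\alpha(f\circ\sigma)(x)=h^m\,(D^\alpha f)(\sigma(x)),
\]
whence $\abs{D^\alpha(f\circ\sigma)(x)-D^\alpha(f\circ\sigma)(y)}^2=h^{2m}\abs{(D^\alpha f)(\sigma(x))-(D^\alpha f)(\sigma(y))}^2$. Substituting this into the definition of $\abs{f\circ\sigma}_{m,\Omega}^2$ extracts a factor $h^{2m}$.

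Next I would set $u=\sigma(x)$ and $v=\sigma(y)$. As $x,y$ range over $\Omega$ the points $u,v$ range over $\sigma(\Omega)$; the Jacobian of $\sigma$ equals $h^d$, so $\dx\,\dy=h^{-2d}\,\mathrm{d}u\,\mathrm{d}v$; and since $\sigma(x)-\sigma(y)=h(x-y)$ the kernel becomes $\h{w}(x-y)=\h{w}\bigl(h^{-1}(u-v)\bigr)$. This turns the integral into
\[
    \abs{f\circ\sigma}_{m,\Omega}^2=-\frac{h^{2m-2d}}{2}\sum_{\abs{\alpha}=m}c_\alpha\int_{\sigma(\Omega)}\int_{\sigma(\Omega)}\h{w}\bigl(h^{-1}(u-v)\bigr)\,\abs{(D^\alpha f)(u)-(D^\alpha f)(v)}^2\,\mathrm{d}u\,\mathrm{d}v.
\]

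The remaining task is to compare $\h{w}\bigl(h^{-1}(u-v)\bigr)$ with $\h{w}(u-v)$. Applying the scaling axiom (W12) with scale $h^{-1}$ shows that these two quantities are comparable up to fixed positive multiplicative constants times $h^{-\lambda}$, uniformly in $u$ and $v$. Here lies the one point that requires care: since $\h{w}\leq0$ while $\abs{(D^\alpha f)(u)-(D^\alpha f)(v)}^2\geq0$, one must track the sign of the kernel against the direction of the bounds furnished by (W12); the overall factor $-\tfrac12$ renders each seminorm nonnegative, and because the comparison is pointwise it survives multiplication by the nonnegative factor and integration, yielding two-sided bounds in the correct direction. (Axiom (W11) guarantees that the kernel is symmetric, so interchanging $u$ and $v$ causes no difficulty.) Collecting the three powers of $h$---namely $h^{2m}$ from the derivatives, $h^{-2d}$ from the Jacobian, and $h^{-\lambda}$ from (W12)---shows that $\abs{f\circ\sigma}_{m,\Omega}^2$ is bounded above and below by fixed positive multiples of $h^{2m-2d-\lambda}\abs{f}_{m,\sigma(\Omega)}^2$. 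Taking square roots and writing $2m-2d-\lambda=2(m-\lambda/2-d)$ gives the assertion.

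Finally I would note that the comparison constants come solely from the pointwise axiom (W12) together with the fixed coefficients $c_\alpha$, and are therefore independent of $\Omega$, as claimed. The calculation above is literal for $f$ in the dense subspace $X^m(\sigma(\Omega))$, where all derivatives and integrals are classical; since $f\mapsto f\circ\sigma$ carries $X^m(\sigma(\Omega))$ into $X^m(\Omega)$ and the estimate just proved shows this map scales the seminorm by the fixed factor $h^{m-\lambda/2-d}$ up to the constants $K_1,K_2$, a routine Cauchy-sequence argument extends the identity to all of $\CMcal{Y}^m(\sigma(\Omega))$. I expect the sole genuinely delicate step to be the sign bookkeeping when invoking (W12) on the nonpositive kernel inside the integral; the rest is the ``elementary change of variables'' anticipated in the text.
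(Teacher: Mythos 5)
Your proof is correct and is precisely the ``elementary change of variables'' the paper invokes without writing out: the factor $h^{2m}$ from the chain rule, $h^{-2d}$ from the Jacobian, and $h^{-\lambda}$ from (W12) applied to the kernel combine to give the exponent $m-\lambda/2-d$ after taking square roots, and your sign bookkeeping for the nonpositive kernel and the density argument for passing to $\CMcal{Y}^m(\sigma(\Omega))$ are exactly the right finishing touches. Nothing further is needed.
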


We are now ready to state the key result of this section, but before
doing this let us make a simple observation. Look at the unisolvent
points $\dotz{b_1}{b_\ell}$ in the statement of Lemma~\ref{norm
equivalence}. Since $\CMcal{X}^m(\Omega)$ can be embedded in
$C(\Omega)$, it makes sense to talk about the interpolation operator
$P:\CMcal{X}^m(\Omega) \rightarrow \Pi_m(\R^d)$ based on these
points.

\begin{lemma}\label{poly extention}
    Let $w:\R^d \To \R$ satisfy (W0){\Ndash}(W12). Let $B$ be any ball
    of radius $h$ and centre $a \in \R^d$, and let
    $f \in \CMcal{X}^m(B)$. Whenever $\dotz{b_1}{b_\ell} \in \R^d$ are unisolvent
    with respect to $\Pi_m(\R^d)$ let $P_b:C(\R^d) \To \Pi_m(\R^d)$ be the
    Lagrange interpolation operator on $\dotz{b_1}{b_\ell}$. Then
    there exists $c=(\dotz{c_1}{c_\ell}) \in B^\ell$ and
    $g \in \CMcal{X}^m(\R^d)$ such that
        \begin{enumerate}
            \item $g(x)=(f-P_c f)(x)$ for all $x \in B$;
            \item $g(x)=0$ for all $\abs{x-a}>2h$;
            \item there exists a $C>0$, independent of $f$ and $B$, such
            that $\abs{g}_{m,\R^d} \leq C \abs{f}_{m,B}$.
        \end{enumerate}
    Furthermore, $\dotz{c_1}{c_\ell}$ can be arranged so that $c_1=a$.
\end{lemma}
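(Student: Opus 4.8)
The plan is to freeze all constants by transplanting the problem onto a single fixed reference ball, performing the extension there, and then affinely transporting the result back to $B$; the scaling factors incurred by the two transports will cancel, leaving a constant independent of $h$. Throughout, write $B(c,r)$ for the open ball of radius $r$ centred at $c$, and let $C$ denote a constant, possibly changing between occurrences, that never depends on $h$, $a$, or $f$. Fix once and for all the reference ball $B_1:=B(0,1)$, which is a V-domain, together with points $\dotz{\tilde c_1}{\tilde c_\ell}\in B_1$ that are unisolvent with respect to $\Pi_m(\R^d)$ and satisfy $\tilde c_1=0$ (one is free to prescribe the origin as a node and complete it to a unisolvent set). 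For the given ball $B=B(a,h)$ put $\sigma(x):=a+hx$, so that $\sigma(B_1)=B$, and define the required nodes by $c_i:=\sigma(\tilde c_i)=a+h\tilde c_i\in B$; in particular $c_1=\sigma(0)=a$, which is the final assertion of the lemma.

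First I would strip off the polynomial part. As $\CMcal{X}^m(B)$ embeds in $C(B)$, the interpolant $P_cf$ is well defined, and since $\abs{\cdotc}_{m,B}$ annihilates $\Pi_m(\R^d)$ we have $\abs{f-P_cf}_{m,B}=\abs{f}_{m,B}$. Set $u:=f-P_cf$ and $\tilde u:=u\circ\sigma$. The interpolation conditions give $\tilde u(\tilde c_i)=u(c_i)=0$ for every $i$, and this is the decisive gain: with every nodal value vanishing, the auxiliary norm of Lemma~\ref{norm equivalence} for $B_1$ collapses to the seminorm, so $\norm{\tilde u}_{m,B_1}\leq C\abs{\tilde u}_{m,B_1}$ with a fixed $C$. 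Combining this with Lemma~\ref{CoV lemma 2} applied to $\Omega=B_1$, $\sigma(\Omega)=B$ yields
\begin{equation*}
\norm{\tilde u}_{m,B_1}\;\leq\;C\abs{\tilde u}_{m,B_1}\;\leq\;C\,h^{m-\lambda/2-d}\abs{u}_{m,B}\;=\;C\,h^{m-\lambda/2-d}\abs{f}_{m,B}.
\end{equation*}

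Next I would extend on the reference ball and transport back. Because $B_1$ is a fixed V-domain, Theorem~\ref{vail norm exten thm} produces $L\tilde u\in\CMcal{X}^m(\R^d)$ with $L\tilde u=\tilde u$ on $B_1$ and $\norm{L\tilde u}_{m,\R^d}\leq C\norm{\tilde u}_{m,B_1}$, and by the remark following that theorem the support of $L\tilde u$ may be prescribed to lie in $\overline{B(0,2)}$. Define $g:=L\tilde u\circ\sigma^{-1}$, which lies in $\CMcal{X}^m(\R^d)$ since composition with the affine (hence locally $(m+1)$-smooth) map $\sigma^{-1}$ preserves that space. Then $g=\tilde u\circ\sigma^{-1}=u=f-P_cf$ on $B$, giving~(1); and $\supp(g)\subset\sigma(\overline{B(0,2)})=\overline{B(a,2h)}$, so $g$ vanishes for $\abs{x-a}>2h$, giving~(2). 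For~(3) I would apply Lemma~\ref{CoV lemma 2} a second time, now to $g\circ\sigma=L\tilde u$ with $\Omega=\R^d$, obtaining $\abs{g}_{m,\R^d}\leq C\,h^{-(m-\lambda/2-d)}\abs{L\tilde u}_{m,\R^d}$. Chaining this with the extension bound and the estimate above, the two powers $h^{\pm(m-\lambda/2-d)}$ cancel and leave $\abs{g}_{m,\R^d}\leq C\abs{f}_{m,B}$ with $C$ independent of $h$ and $f$.

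The point to watch is precisely this cancellation of scaling exponents, which is what renders the constant uniform in the radius $h$; everything else is bookkeeping. The one genuinely structural step is the passage from a seminorm bound to the full-norm bound demanded by the compactly supported extension of Theorem~\ref{vail norm exten thm}. This is exactly why $P_cf$ is subtracted at the outset: without the vanishing of the nodal values of $\tilde u$ one could not invoke Lemma~\ref{norm equivalence} to trade the full norm $\norm{\tilde u}_{m,B_1}$ for the seminorm $\abs{\tilde u}_{m,B_1}$, and the seminorm-only extension of Theorem~\ref{vail norm exten thm 2} does not by itself deliver the required compact support.
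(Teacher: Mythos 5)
Your proposal is correct and follows essentially the same route as the paper: rescale to a fixed unit ball, subtract $P_cf$ so that the nodal values vanish and Lemma~\ref{norm equivalence} trades the full norm for the seminorm, apply the compactly supported extension of Theorem~\ref{vail norm exten thm}, and transport back with Lemma~\ref{CoV lemma 2} so that the two scaling factors $h^{\pm(m-\lambda/2-d)}$ cancel. The only (harmless) difference is one of bookkeeping: you invoke the norm equivalence once on $B_1$ before extending and then use $\abs{L\tilde u}_{m,\R^d}\leq\norm{L\tilde u}_{m,\R^d}$, whereas the paper applies the equivalence on both the unit ball and the double ball after extending.
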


\begin{proof}
Let $B_1$ be the unit ball in $\R^d$ and let $B_2=2B_1$. Let
$\dotz{b_1}{b_\ell} \in B_1$ be unisolvent with respect to
$\Pi_m(\R^d)$. Define $\sigma(x)=h^{-1}(x-a)$ for all $x \in \R^d$.
Set $c_i = \sigma^{-1}(b_i)$ for $i=\dotz{1}{\ell}$ so that
$\dotz{c_1}{c_\ell} \in B$ are unisolvent with respect to
$\Pi_m(\R^d)$. Take $f \in \CMcal{X}^m(B)$. Then $(f-P_c f) \circ
\sigma^{-1} \in \CMcal{X}^m(B_1)$. Set $F=(f-P_c f) \circ
\sigma^{-1}$. Let $F^{B_1}$ be constructed as an extension to $F$ on
$B_1$. By Theorem~\ref{vail norm exten thm} and the remark following
it, we can assume $F^{B_1}$ is supported on $B_2$. Define $g=F^{B_1}
\circ \sigma \in \CMcal{X}^m(\R^d)$. Let $x \in B$. Since
$\sigma(B)=B_1$ there is a $y \in B_1$ such that $x=\sigma^{-1}(y)$.
Then,
\begin{equation*}
    g(x)=(F^{B_1} \circ \sigma)(x) = F^{B_1}(y) = ((f-P_c f) \circ
    \sigma^{-1}) (y) = (f-P_c f)(x).
\end{equation*}
Also, for $x \in \R^d$ with $\abs{x-a}>2h$, we have
$\abs{\sigma(x)}> 2$. Since $F^{B_1}$ is supported on $B_2$,
$g(x)=0$ for $\abs{x-a}>2h$. Hence, $g$ satisfies properties
\textit{1} and \textit{2}. By Theorem~\ref{vail norm exten thm}
there is a $K_1$, independent of $f$ and $B$, such that
\begin{equation*}
    \norm{F^{B_1}}_{m,B_2} \leq \norm{F^{B_1}}_{m,\R^d} \leq K_1
    \norm{F}_{m,B_1}.
\end{equation*}
We have seen in Lemma~\ref{norm equivalence} that if we endow
$\CMcal{X}^m(B_1)$ and $\CMcal{X}^m(B_2)$ with the norms
\begin{equation*}
    \norm{v}_{B_i} = \biggl( \abs{v}_{m,B_i}^2 + \sum_{i=1}^\ell
    \abs{v(b_i)}^2 \biggr)^{1/2},
\qquad i=1,2,
\end{equation*}
then $\norm{\cdotc}_{B_i}$ and $\norm{\cdotc}_{m,B_i}$ are
equivalent for $i=1,2$.  Thus, there are constants $K_2$ and $K_3$,
independent of $f$ and $B$, such that
\begin{equation*}
    \norm{F^{B_1}}_{B_2} \leq K_2 \norm{F^{B_1}}_{m,B_2} \leq K_1 K_2
    \norm{F}_{m,B_1} \leq K_1 K_2 K_3 \norm{F}_{B_1}.
\end{equation*}
Set $C_1=K_1 K_2 K_3$. Since $F^{B_1}(b_i)=F(b_i)=(f-P_c
f)(\sigma^{-1}(b_i))=(f-P_c f)(c_i)=0$ for $i=\dotz{1}{\ell}$, it
follows that $\abs{F^{B_1}}_{m,B_2} \leq C_1 \abs{F}_{m,B_1}$. Thus,
$\abs{g \circ \sigma^{-1}}_{m,\R^d} \leq C_1 \abs{(f-P_c f) \circ
\sigma^{-1}}_{m,B_1}$. Now, Lemma~\ref{CoV lemma 2} can be employed
twice to provide us with constants $C_2$ and $C_3>0$, independent of
$f$ and $B$, such that
\begin{multline*}
    \abs{g}_{m,\R^d} \leq C_2 h^{d+\lambda/2-m}\abs{g \circ
    \sigma^{-1}}_{m,\R^d}\\
    \leq C_1 C_2 h^{d+\lambda/2-m} \abs{(f-P_c f) \circ \sigma^{-1}}_{m,B_1}
    \leq C_1 C_2 C_3  \abs{f-P_c f}_{m,B}.
\end{multline*}
Finally, we observe that $\abs{f-P_c f}_{m,B} = \abs{f}_{m,B}$ to
complete the first part of the proof. The remaining part follows by
selecting $b_1=0$ and choosing $\dotz{b_2}{b_\ell}$ accordingly in
the above construction.
\end{proof}

%
%

\section{Error estimates}

In this section we establish the error estimate conjectured in the
introduction. We begin with a function $f$ in $\CMcal{Z}^k(\R^d)$.
We want to estimate
\begin{equation}\label{error}
\norm{f-S_mf}_{L_2(\Omega)},
\end{equation}
where $S_m$ is the minimal norm interpolation operator from
$\CMcal{Z}^m(\R^d)$ on $\nodes$ and $m> k$. The essence of the proof
is as follows. Firstly, by adjusting $f$, we obtain a function
$\tilde{f}$, still in $\CMcal{Z}^k(\R^d)$, with seminorm in
$\CMcal{Z}^k(\R^d)$ not too far from that of $f$. We then smooth
$\tilde{f}$ by convolving it with a function $\phi \in
C^\infty_0(\R^d)$. The key feature of the adjustment of $f$ to
$F:=\phi * \tilde{f}$ is that $F(a) = f(a)$ for every point $a \in
\nodes$ (Theorem~\ref{general result}). This enables us to replace
$S_m f$ with $S_m F$ in~\eqref{error}. Furthermore, it follows that
$F\in \CMcal{Z}^m(\R^d)$ so we can employ an existing $L_2$-error
estimate to $F-S_m F$. The remaining part of the error, $f-F$, is
easily dealt with as it vanishes on $\nodes$. Finally,
Lemma~\ref{seminorm convolution bound} takes us back to an error
estimate in $\CMcal{Z}^k(\R^d)$.

\begin{lemma}\label{seminorm convolution bound}
    Let $w:\R^d \To \R$ satisfy (W0) and (W1). Let $k\leq m$ and
    $\phi \in C^\infty_0(\R^d)$. For each $h>0$ let
    $\phi_h(x) = h^{-d} \phi(x/h)$ for $x \in \R^d$. Then there exists
    a constant $C>0$, independent of $h$, such that for all $f \in
    \CMcal{Z}^k(\R^d)$, $\abs{\phi_h * f}_{m,\R^d} \leq C h^{k-m} \abs{f}_{k,\R^d}$.
\end{lemma}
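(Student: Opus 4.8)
The plan is to move everything to the Fourier side, where both seminorms and the convolution become transparent. First I would rewrite the seminorm without the differential operators: since $\h{D^\alpha f}(x) = (\mathrm{i}x)^\alpha \h{f}(x)$, we have $\abs{\h{D^\alpha f}(x)}^2 = x^{2\alpha}\abs{\h{f}(x)}^2$, and the defining property $\sum_{\abs{\alpha}=m} c_\alpha x^{2\alpha} = \abs{x}^{2m}$ collapses the sum to give
\begin{equation*}
    \abs{f}_{m,\R^d}^2 = \int_{\R^d} w(x)\abs{x}^{2m}\abs{\h{f}(x)}^2\,\dx,
\end{equation*}
and likewise with $m$ replaced by $k$. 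This uses only the original definition of the seminorm, so (W0) and (W1) suffice and none of the later axioms are needed.

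Next I would compute the Fourier transform of the convolution. By the convolution theorem in the paper's normalisation, $\h{\phi_h * f} = (2\pi)^{d/2}\h{\phi_h}\,\h{f}$, and a routine change of variables gives $\h{\phi_h}(x) = \h{\phi}(hx)$. Substituting into the display above yields
\begin{equation*}
    \abs{\phi_h * f}_{m,\R^d}^2 = (2\pi)^d \int_{\R^d} w(x)\abs{x}^{2m}\abs{\h{\phi}(hx)}^2\abs{\h{f}(x)}^2\,\dx.
\end{equation*}

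The heart of the argument is a single uniform pointwise estimate. Splitting $\abs{x}^{2m} = \abs{x}^{2k}\abs{x}^{2(m-k)}$ and extracting the scaling factor, one sees that
\begin{equation*}
    \abs{x}^{2(m-k)}\abs{\h{\phi}(hx)}^2 = h^{-2(m-k)}\,\abs{hx}^{2(m-k)}\abs{\h{\phi}(hx)}^2.
\end{equation*}
Since $\phi \in C^\infty_0(\R^d)$, its transform $\h{\phi}$ is a Schwartz function; because $k\leq m$ the exponent $m-k$ is nonnegative, so $y \mapsto \abs{y}^{m-k}\abs{\h{\phi}(y)}$ neither blows up at the origin nor at infinity, whence $M := \sup_{y\in\R^d}\abs{y}^{m-k}\abs{\h{\phi}(y)}$ is finite and independent of $h$.

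Combining these, the integrand is dominated pointwise by $h^{2(k-m)}M^2\,w(x)\abs{x}^{2k}\abs{\h{f}(x)}^2$, and integrating gives $\abs{\phi_h * f}_{m,\R^d}^2 \leq (2\pi)^d M^2 h^{2(k-m)}\abs{f}_{k,\R^d}^2$, which is the claim with $C = (2\pi)^{d/2}M$; the same estimate shows $\phi_h * f$ has finite $m$-seminorm, so it indeed lies in $\CMcal{Z}^m(\R^d)$. The only genuinely non-routine point, and hence the main obstacle, is the finiteness of $M$, and this is exactly where the two hypotheses pull their weight: the compact support and smoothness of $\phi$ force the rapid decay of $\h{\phi}$ that controls large $y$, while the assumption $k\leq m$ prevents the factor $\abs{y}^{m-k}$ from producing a singularity at $y=0$.
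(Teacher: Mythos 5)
Your argument is correct and is essentially the one the paper has in mind: the paper omits the proof, deferring to the $w=1$ case of Brownlee \& Light and asserting that the general case ``does not differ substantially'', and your computation---in which the weight $w(x)$ simply rides along untouched while the factor $\abs{hx}^{m-k}\abs{\h{\phi}(hx)}$ is bounded uniformly in $h$ because $\h{\phi}$ is Schwartz and $m-k\geq 0$---is precisely why that assertion holds. The one point to treat with slight care is that for $f\in\CMcal{Z}^k(\R^d)$ the transform $\h{f}$ is only guaranteed to be a function away from the origin (a polynomial component of degree less than $k$ may contribute a singular part at $0$), but such a component is killed by $D^\alpha$ for $\abs{\alpha}=k$ and $\abs{\alpha}=m$ after convolution with $\phi_h$, so your identity for the seminorms holds as stated.
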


\proof The case $w=1$ is established in \cite{brownlee1}. The proof
for this more general set up does not differ substantially so is
omitted. \qed

\begin{lemma}[Brownlee \& Light \cite{brownlee1}]\label{poly repro}
    Suppose $\phi \in C^{\infty}_0(\R^{d})$ is supported on the unit ball and
    satisfies
    \begin{equation*}
        \int_{\R^d} \phi(x)\,\dx = 1\qquad \mbox{and}\qquad \int_{\R^d}
        \phi(x) x^\alpha\,\dx =0,\qquad \mbox{for all}\  0< \abs{\alpha}
        \leq k.
    \end{equation*}
    For each $\eps>0$ and $x \in \R^d$, let $\phi_\eps(x) =
\eps^{-d}
    \phi(x/\eps)$. Let $B$ be any ball of radius $h$ and centre $a \in
    \R^d$. For a fixed $p \in \Pi_k(\R^d)$ let $f$ be a mapping from
    $\R^d$ to $\R$ such that $f(x) = p(x)$ for all $x \in B$. Then
    $(\phi_\eps * f) (a) = p(a)$ for all $\eps \leq h$.
\end{lemma}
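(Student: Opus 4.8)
The plan is to evaluate the convolution directly and let the moment conditions on $\phi$ do the work. First I would write the convolution in the form
\begin{equation*}
    (\phi_\eps * f)(a) = \int_{\R^d} \phi_\eps(y)\, f(a-y)\, \dy,
\end{equation*}
and note that $\phi_\eps$ is supported on the ball of radius $\eps$ centred at the origin. Since $\eps \leq h$, for every $y$ in this support we have $\abs{(a-y)-a}=\abs{y}\leq\eps\leq h$, so $a-y \in B$ and hence $f(a-y)=p(a-y)$. Because the integrand is supported where this identity holds, I may replace $f$ by $p$ throughout, obtaining $(\phi_\eps * f)(a) = \int_{\R^d} \phi_\eps(y)\, p(a-y)\, \dy$.

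Next I would expand $p(a-y)$ as a polynomial in $y$, holding $a$ fixed. Since $p \in \Pi_k(\R^d)$, its Taylor expansion about $a$ terminates:
\begin{equation*}
    p(a-y) = \sum_{\abs{\beta} \leq k} \frac{(-1)^{\abs{\beta}}}{\beta!}\, (D^\beta p)(a)\, y^\beta.
\end{equation*}
Substituting this finite sum and interchanging it with the integral reduces the whole computation to the moments $\int_{\R^d} \phi_\eps(y)\, y^\beta\, \dy$ for $\abs{\beta}\leq k$.

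A change of variables $y=\eps z$ then gives $\int_{\R^d} \phi_\eps(y)\, y^\beta\, \dy = \eps^{\abs{\beta}} \int_{\R^d} \phi(z)\, z^\beta\, \mathrm{d}z$, at which point the hypotheses on $\phi$ take over: this moment equals $1$ when $\beta=0$ and vanishes for every $0<\abs{\beta}\leq k$. Consequently every term of the sum drops out except the one indexed by $\beta=0$, which contributes exactly $(D^0 p)(a)=p(a)$. This yields $(\phi_\eps * f)(a)=p(a)$, as claimed.

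The argument is essentially a matter of bookkeeping rather than a deep one; the only point that genuinely requires care is the support containment, which is precisely where the hypothesis $\eps \leq h$ is used to justify replacing $f$ by $p$ inside the integral. Once that replacement is legitimate, the prescribed vanishing moments of $\phi$ force the conclusion.
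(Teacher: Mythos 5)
Your proof is correct. The paper itself states this lemma without proof, citing Brownlee \& Light \cite{brownlee1}, so there is no in-paper argument to compare against; your computation\Mdash{}restricting the convolution integral to $\supp(\phi_\eps)$, using $\eps\leq h$ to replace $f$ by $p$ there, expanding $p(a-y)$ exactly by Taylor's theorem, and killing every term except $\beta=0$ with the moment conditions\Mdash{}is the standard and expected route, and each step is sound.
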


\begin{defin}
    Let $\Omega$ be an open, bounded subset of $\R^d$. Let $\nodes$ be a
    set of points in $\Omega$. The quantity $h:=\sup_{x \in
    \overline{\Omega}} \inf_{a \in \nodes} \abs{x-a}$ is called the fill-distance of
    $\nodes$ in $\Omega$. The separation of $\nodes$ is
    given by the quantity $q:=\min_{\substack{a,b \in \nodes \\ a\neq b} }
        \frac{\abs{a-b}}{2}$. The quantity $h/q$ will be called the mesh-ratio of $\nodes$.
\end{defin}

\begin{thm}\label{general result}
    Let $w:\R^d \To \R$ satisfy (W0){\Ndash}(W12).
    Let $k+\mu-d/2>0$ and $m\geq k$. Let $\nodes$ be a finite subset
    of $\R^d$ of separation $q>0$. Then
    for all $f \in \CMcal{X}^k(\R^d)$ there exists an $F \in \CMcal{X}^m(\R^d)$
    such that
        \begin{enumerate}
            \item $F(a)=f(a)$ for all $a \in \nodes$;
            \item there exists a $C>0$, independent of $f$ and $q$, with $\abs{F}_{k,\R^d}\leq C \abs{f}_{k,\R^d}$
             and $\abs{F}_{m,\R^d} \leq C q^{k-m}\abs{f}_{k,\R^d}$.
        \end{enumerate}
\end{thm}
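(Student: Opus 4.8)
The plan is to realise the blueprint sketched at the start of this section: manufacture a function $\tilde f\in\CMcal{X}^k(\R^d)$ that agrees with a polynomial of degree $k$ on a small ball about each node, and then set $F:=\phi_\eps*\tilde f$ for a mollifier $\phi$ and a scale $\eps$ comparable to $q$. Concretely, fix $\phi\in C^\infty_0(\R^d)$ supported in the unit ball with $\int\phi=1$ and vanishing moments up to order $k$, as required by Lemma~\ref{poly repro}. For each $a\in\nodes$ I would apply Lemma~\ref{poly extention} (with $k$ in place of $m$) to $f$ on the ball $B(a,q/4)$: this produces unisolvent points $c^a=(\dotz{c^a_1}{c^a_\ell})$ with $c^a_1=a$ and a function $g_a\in\CMcal{X}^k(\R^d)$ with $g_a=f-P_{c^a}f$ on $B(a,q/4)$, $\supp g_a\subset B(a,q/2)$, and $\abs{g_a}_{k,\R^d}\le C\abs{f}_{k,B(a,q/4)}$. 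I then set $\tilde f:=f-\sum_{a\in\nodes}g_a$ (a finite sum, since $\nodes$ is finite) and $F:=\phi_\eps*\tilde f$ with $\eps=q/4$; throughout I identify $\CMcal{X}^k(\R^d)$ with $\CMcal{Z}^k(\R^d)$ via~\eqref{direct form}.

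Property~1 and the reduction of property~2 are then quick. Because $\nodes$ has separation $q$, distinct nodes are at least $2q$ apart, so the supports $B(a,q/2)$ are pairwise separated by distance at least $q$; in particular only $g_a$ is active on $B(a,q/4)$, so there $\tilde f=f-g_a=P_{c^a}f=:p_a\in\Pi_k(\R^d)$, and $p_a(a)=(P_{c^a}f)(a)=f(a)$ since $a=c^a_1$ is an interpolation node. Lemma~\ref{poly repro} (with ball radius $q/4$ and $\eps=q/4$) then yields $F(a)=(\phi_\eps*\tilde f)(a)=p_a(a)=f(a)$, which is property~1. For property~2 I would apply Lemma~\ref{seminorm convolution bound} twice, obtaining $\abs{F}_{m,\R^d}\le C\eps^{k-m}\abs{\tilde f}_{k,\R^d}\le C q^{k-m}\abs{\tilde f}_{k,\R^d}$ and $\abs{F}_{k,\R^d}\le C\abs{\tilde f}_{k,\R^d}$. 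Thus everything reduces to the single uniform estimate $\abs{\tilde f}_{k,\R^d}\le C\abs{f}_{k,\R^d}$, with $C$ independent of $f$, of $q$, and of the cardinality of $\nodes$.

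By the triangle inequality it suffices to bound $\abs{G}_{k,\R^d}$, where $G:=\sum_a g_a$. Writing this seminorm in its direct form~\eqref{direct form}, whose kernel $-\h{w}$ is nonnegative, and expanding $D^\alpha G=\sum_a D^\alpha g_a$ over the disjoint supports, one obtains an identity of the shape $\abs{G}_{k,\R^d}^2=\sum_a\abs{g_a}_{k,\R^d}^2+R$, where the remainder collects cross terms $R=\sum_{\abs\alpha=k}c_\alpha\sum_{a\ne b}\int\int\h{w}(x-y)(D^\alpha g_a)(x)(D^\alpha g_b)(y)\,\mathrm{d}x\,\mathrm{d}y$. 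The diagonal part is harmless: since $-\h{w}\ge0$ the local seminorm is monotone in its domain and the balls $B(a,q/4)$ are disjoint, so $\sum_a\abs{g_a}_{k,\R^d}^2\le C\sum_a\abs{f}_{k,B(a,q/4)}^2\le C\abs{f}_{k,\R^d}^2$. The remainder $R$ is the crux, and the main obstacle of the proof: the kernel $\h{w}$ is nonlocal, so the pieces $g_a$ genuinely interact, whereas in the case $w=1$ of~\cite{brownlee1} the seminorm is a single integral and $R$ simply vanishes.

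To dominate $R$ I would exploit that $\h{w}(x-y)$ is only ever evaluated at $\abs{x-y}\ge q$. Using $2\bigl|(D^\alpha g_a)(x)(D^\alpha g_b)(y)\bigr|\le\abs{(D^\alpha g_a)(x)}^2+\abs{(D^\alpha g_b)(y)}^2$ and symmetry, $\abs{R}$ is controlled by $\bigl(\sup_x\sum_{b\ne a}\int_{\supp g_b}\abs{\h{w}(x-y)}\,\mathrm{d}y\bigr)\sum_a\norm{D^\alpha g_a}_{L_2}^2$, and the supremum is at most $\int_{\abs z\ge q}\abs{\h{w}(z)}\,\mathrm{d}z$, which is finite by (W6) and of size $\order{(q^{d+\lambda})}$ by the scaling axiom (W12). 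It remains to bound each $\norm{D^\alpha g_a}_{L_2}^2$; rescaling $B(a,q/2)$ to the unit ball via a map $\sigma$, the norm equivalence of Lemma~\ref{norm equivalence} (valid because $g_a$ vanishes at the $c^a_i$) controls $\norm{D^\alpha(g_a\circ\sigma^{-1})}_{L_2}$ by $\abs{g_a\circ\sigma^{-1}}_{k}$, and Lemma~\ref{CoV lemma 2} together with the elementary $L_2$-scaling under dilation returns $\norm{D^\alpha g_a}_{L_2}^2\le Cq^{-d-\lambda}\abs{g_a}_{k,\R^d}^2$. The factors $q^{d+\lambda}$ and $q^{-d-\lambda}$ cancel, so $\abs{R}\le C\sum_a\abs{g_a}_{k,\R^d}^2\le C\abs{f}_{k,\R^d}^2$ and hence $\abs{\tilde f}_{k,\R^d}\le C\abs{f}_{k,\R^d}$. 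The step I expect to be hardest to pin down is exactly this cancellation: one must tie the local radius to $q$ so that the separation of supports, the $q^{d+\lambda}$ decay of the truncated mass of $\h{w}$, and the $q^{-d-\lambda}$ growth in the rescaled $L_2$-bounds conspire to a constant free of $q$ and of $\#\nodes$.
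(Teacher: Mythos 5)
Your proposal is correct and follows the paper's overall architecture exactly{\Mdash}localise $f$ near each node with Lemma~\ref{poly extention} on balls of radius $q/4$, subtract to obtain a function that is polynomial near each node, mollify at scale $q/4$, and finish with Lemmas~\ref{poly repro} and~\ref{seminorm convolution bound}{\Mdash}but you handle the one genuinely delicate step, the bound $\bigabs{\sum_{a}g_a}_{k,\R^d}\leq C\abs{f}_{k,\R^d}$, by a different argument. You expand the squared seminorm into a diagonal part plus a cross-term remainder $R$, estimate $R$ by the arithmetic\Ndash geometric mean inequality, and then play off the $\order(q^{d+\lambda})$ mass of $\h{w}$ outside the ball of radius $q$ (via (W6) and (W12)) against an $\order(q^{-d-\lambda})$ bound for $\norm{D^\alpha g_a}_{L_2}^2$ obtained from Lemma~\ref{norm equivalence} and the scaling Lemma~\ref{CoV lemma 2}; the powers of $q$ cancel as you anticipate, and uniformity in the cardinality of $\nodes$ comes from the disjointness of the supports. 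The paper avoids all of this quantitative input: on $\supp(g_b)\times\supp(g_c)$ with $b\neq c$ it rewrites $(D^\alpha g_b)(x)-(D^\alpha g_c)(y)$ as $\bigl((D^\alpha g_b)(x)-(D^\alpha g_b)(y)\bigr)+\bigl((D^\alpha g_c)(x)-(D^\alpha g_c)(y)\bigr)$, which is legitimate precisely because the supports are disjoint, then applies $\abs{A+B}^2\leq 2\abs{A}^2+2\abs{B}^2$ and enlarges the domains of integration, using only the sign condition $\h{w}\leq 0$ from (W5). This yields $\bigabs{\sum_{a}g_a}_{k,\R^d}^2\leq 7\sum_{a}\abs{g_a}_{k,\R^d}^2$ with no appeal to (W12), to Lemma~\ref{norm equivalence}, or to Lemma~\ref{CoV lemma 2}. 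Your route works but is heavier: it imports the homogeneity axiom and the norm-equivalence machinery into a step where the sign of $\h{w}$ alone suffices, and you would still need to verify that the constants in Lemma~\ref{norm equivalence} are uniform over the rescaled configurations of unisolvent points; the paper's support-rearrangement trick is the cleaner and more robust way to kill the cross terms.
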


\proof Take $f \in \CMcal{X}^k(\R^d)$. For each $a \in \nodes$ let
$B_a \subset \R^d$ denote the ball of radius $\delta = q/4$ centred
at $a$. For each $B_a$ let $g_a$ be constructed in accordance with
Lemma~\ref{poly extention}.  That is, for each $a \in \nodes$ take
$c'=(\dotz{c_2}{c_\ell}) \in B_a^{\ell-1}$ and $g_a \in
\CMcal{X}^k(\R^d)$ such that
\begin{enumerate}
    \item $a,\dotz{c_2}{c_\ell}$ are unisolvent with respect to $\Pi_k(\R^d)$
    \item $g_a(x)=(f-P_{(a,c')} f)(x)$ for all $x \in B_a$;
    \item $P_{(a,c')} f \in \Pi_k(\R^d)$ and $(P_{(a,c')} f)(a) = f(a)$;
    \item $g_a(x)=0$ for all $\abs{x-a}>2\delta$;
    \item there exists a $C_1>0$, independent of $f$ and $B_a$, such
    that $\abs{g_a}_{k,\R^d} \leq C_1 \abs{f}_{k,B_a}$.
\end{enumerate}
Note that if $a\neq b$, then $\supp(g_a)$ does not intersect
$\supp(g_b)$, because if $x\in \supp(g_a)$ then
\begin{equation*}
  \abs{x - b} > \abs{b-a}-\abs{x-a} \geq 2q - 2 \delta = 6 \delta.
\end{equation*}
Let $U = \bigcup_{b \in \nodes} \supp{(g_b)}$, then writing
$\R^d=(\R^d \setminus U) \cup U$ we obtain
\begin{align}
\biggabs{\sum_{a \in \nodes} g_a}_{k,\R^d}^2
    &= -\frac{1}{2}
    \sum_{\abs{\alpha}=k} c_\alpha \int_{\R^d} \int_{\R^d} \h{w}(x-y)
    \biggabs{\sum_{a \in \nodes} ((D^\alpha g_a)(x)-(D^\alpha
    g_a)(y))}^2\, \dx \dy \nonumber \\
        &=   -\frac{1}{2} \sum_{\abs{\alpha}=k} c_\alpha
    \biggl( \int_{\R^d \setminus U} \int_{\R^d \setminus U} \h{w}(x-y) \biggabs{\sum_{a \in \nodes} ((D^\alpha g_a)(x)-(D^\alpha
    g_a)(y))}^2\, \dx \dy \nonumber \\
        & \qquad +2\int_{\R^d \setminus U} \int_{U} \h{w}(x-y) \biggabs{\sum_{a \in \nodes} ((D^\alpha g_a)(x)-(D^\alpha
    g_a)(y))}^2\, \dx \dy \nonumber \\
        & \qquad\qquad\qquad +\int_{U} \int_{U} \h{w}(x-y) \biggabs{\sum_{a \in \nodes}
((D^\alpha g_a)(x)-(D^\alpha
    g_a)(y))}^2\, \dx \dy\biggr)  \label{split double integral}.
\end{align}
We shall now consider each of the double integrals in~\eqref{split
double integral} separately. Firstly, the integral over $(\R^d
\setminus U) \times (\R^d \setminus U)$ is zero because $\sum_{a \in
\nodes} g_a$ is supported on $U$. Next, using the observation above
regarding the support of $g_a$, $a \in \nodes$, it follows that
\begin{align}
-\frac{1}{2} \sum_{\abs{\alpha}=k} c_\alpha &\int_{\R^d \setminus U}
\int_{ U} \h{w}(x-y) \biggabs{\sum_{a \in \nodes} ((D^\alpha
g_a)(x)-(D^\alpha
    g_a)(y))}^2\, \dx \dy \nonumber \\
    &= \sum_{b \in \nodes} -\frac{1}{2} \sum_{\abs{\alpha}=k} c_\alpha
    \int_{\R^d \setminus U} \int_{\supp{(g_b)}} \h{w}(x-y) \biggabs{\sum_{a \in
    \nodes} (D^\alpha g_a)(x)}^2\, \dx \dy \nonumber \\
    &= \sum_{b \in \nodes} -\frac{1}{2} \sum_{\abs{\alpha}=k} c_\alpha
    \int_{\R^d \setminus U} \int_{\supp{(g_b)}} \h{w}(x-y) \abs{(D^\alpha g_b)(x)}^2\, \dx
    \dy \nonumber \\
    &= \sum_{b \in \nodes} -\frac{1}{2} \sum_{\abs{\alpha}=k} c_\alpha
    \int_{\R^d \setminus U} \int_{\supp{(g_b)}} \h{w}(x-y) \abs{(D^\alpha g_b)(x)-(D^\alpha g_b)(y)}^2\, \dx
    \dy \nonumber \\
    &\leq \sum_{b \in \nodes} \abs{g_b}_{k,\R^d}^2 \label{double int number 1}.
\end{align}
Before calculating the final integral let us examine the following
expression for $b \in \nodes$ and $\alpha \in \Z^d_+$ with
$\abs{\alpha}=m$,
\begin{align}
    &\sum_{\substack{c \in \nodes \\ c \neq b}}  \int_{\supp{(g_c)}} \int_{\supp{(g_b)}} \h{w}(x-y) \abs{(D^\alpha
    g_b)(x)-(D^\alpha g_c)(y)}^2\, \dx \dy \nonumber \\
        &= \sum_{\substack{c \in \nodes \\ c \neq b}}  \int_{\supp{(g_c)}}
    \int_{\supp{(g_b)}} \h{w}(x-y) \abs{(D^\alpha
    g_b)(x)-(D^\alpha g_b)(y) + (D^\alpha
    g_c)(x)-(D^\alpha g_c)(y)}^2\, \dx \dy \nonumber \\
    &\leq
    2\sum_{\substack{c \in \nodes \\ c \neq b}}
    \int_{\supp{(g_c)}}
    \int_{\supp{(g_b)}} \h{w}(x-y) \abs{(D^\alpha
    g_b)(x)-(D^\alpha g_b)(y)}^2\, \dx \dy \nonumber \\
    & \hspace{3.5cm} +2\sum_{\substack{c \in \nodes \\ c \neq b}}
    \int_{\supp{(g_c)}}
    \int_{\supp{(g_b)}} \h{w}(x-y) \abs{(D^\alpha
    g_c)(x)-(D^\alpha g_c)(y)}^2\, \dx \dy \nonumber \\
    &\leq 2
    \int_{\R^d}
    \int_{\R^d} \h{w}(x-y) \abs{(D^\alpha
    g_b)(x)-(D^\alpha g_b)(y)}^2\, \dx \dy \nonumber \\
    & \hspace{3.5cm} +2\sum_{\substack{c \in \nodes \\ c \neq b}}
    \int_{\R^d}
    \int_{\supp{(g_b)}} \h{w}(x-y) \abs{(D^\alpha
    g_c)(x)-(D^\alpha g_c)(y)}^2\, \dx \dy \label{int supp c supp b}.
\end{align}
Finally, using the observation regarding the support of $g_a$, $a
\in \nodes$, once again and~\eqref{int supp c supp b} it follows
that
\begin{align}
-\frac{1}{2} &\sum_{\abs{\alpha}=k} c_\alpha \int_{U} \int_{U}
\h{w}(x-y) \biggabs{\sum_{a \in \nodes} ((D^\alpha g_a)(x)-(D^\alpha
    g_a)(y))}^2\, \dx\dy \nonumber \\
    &= \sum_{b \in \nodes} \sum_{c \in \nodes} -\frac{1}{2} \sum_{\abs{\alpha}=k} c_\alpha
    \int_{\supp{(g_c)}} \int_{\supp{(g_b)}} \h{w}(x-y) \biggabs{\sum_{a \in \nodes} ((D^\alpha
    g_a)(x)-(D^\alpha g_a)(y))}^2\, \dx \dy \nonumber\\
    &= \sum_{b \in \nodes} \sum_{c \in \nodes} -\frac{1}{2} \sum_{\abs{\alpha}=k} c_\alpha
    \int_{\supp{(g_c)}} \int_{\supp{(g_b)}} \h{w}(x-y) \abs{(D^\alpha
    g_b)(x)-(D^\alpha g_c)(y)}^2\, \dx \dy\nonumber\\
    &= \sum_{b \in \nodes} -\frac{1}{2} \sum_{\abs{\alpha}=k} c_\alpha
    \int_{\supp{(g_b)}} \int_{\supp{(g_b)}} \h{w}(x-y) \abs{(D^\alpha
    g_b)(x)-(D^\alpha g_b)(y)}^2\, \dx \dy\nonumber\\
    & \hspace{1.5cm} +\sum_{b \in \nodes} \sum_{\substack{c \in \nodes \\ c \neq b}} -\frac{1}{2} \sum_{\abs{\alpha}=k} c_\alpha
    \int_{\supp{(g_c)}} \int_{\supp{(g_b)}} \h{w}(x-y) \abs{(D^\alpha
    g_b)(x)-(D^\alpha g_c)(y)}^2\, \dx \dy\nonumber\\
    &\leq \sum_{b \in \nodes} \abs{g_b}_{k,\R^d}^2 + 2 \sum_{b \in \nodes}
    \abs{g_b}_{k,\R^d}^2+2 \sum_{c \in \nodes}
    \abs{g_c}_{k,\R^d}^2\nonumber\\
    &\leq 5 \sum_{b \in \nodes} \abs{g_b}_{k,\R^d}^2 \label{double int number 2}.
\end{align}
Substituting~\eqref{double int number 1} and~\eqref{double int
number 2} into~\eqref{split double integral} we find
\begin{equation*}
\biggabs{\sum_{a \in \nodes} g_a}_{k,\R^d}^2 \leq 7 \sum_{a \in
\nodes} \abs{g_a}_{k,\R^d}^2.
\end{equation*}
Hence, applying Condition 5 to the above inequality we have
\begin{equation*}
    \biggabs{\sum_{a \in \nodes} g_a}_{k,\R^d}^2 \leq 7 C_1^2 \sum_{a \in \nodes}
    \abs{f}_{k,B_a}^2\leq 7 C_1^2 \abs{f}_{k,\R^d}^2.
\end{equation*}
Now set $H= f-\sum_{a\in \nodes} g_a$. It then follows from
Condition 1 that $H(x) = (P_{(a,c')} f)(x)$ for all $x\in B_a$, and
from Condition 3 that $H(a) = f(a)$ for all $a\in \nodes$. Let $\phi
\in C^{\infty}_0(\R^{d})$ be supported on the unit ball and enjoy
the properties
\begin{equation*}
    \int_{\R^d} \phi(x)\,\dx = 1\qquad \mbox{and}\qquad \int_{\R^d}
    \phi(x) x^\alpha\,\dx =0,\qquad \mbox{for all}\ 0< \abs{\alpha} \leq
    k.
\end{equation*}
Now set $F=\phi_\delta * H$. Using Lemma~\ref{seminorm convolution
bound}, there is a constant $C_2>0$, independent of $q$ and $f$,
such that
\begin{align*}
        \abs{F}_{m,\R^d}^2
            \leq   C_2 \delta^{2(k-m)} \biggabs{f - \sum_{a \in \nodes} g_a}_{k,\R^d}^2
            &\leq   2 C_2 \delta^{2(k-m)} \biggl( \abs{f}_{k,\R^d}^2+ \biggabs{\sum_{a \in \nodes}
            g_a}_{k,\R^d}^2 \biggr)\\
            &\leq   2 C_2 (1+7 C_1^2) \delta^{2(k-m)} \abs{f}_{k,\R^d}^2.
\end{align*}
Similarly, there is a constant $C_3>0$, independent of $q$ and $f$,
such that
\begin{equation*}
        \abs{F}_{k,\R^d}^2
            \leq   C_3 \biggabs{f - \sum_{a \in \nodes} g_a}_{k,\R^d}^2
            \leq   2 C_3 (1+7 C_1^2)\abs{f}_{k,\R^d}^2.
\end{equation*} Thus
$\abs{F}_{m,\R^d} \leq C q^{k-m} \abs{f}_{k,\R^d}$ and
$\abs{F}_{k,\R^d} \leq  C \abs{f}_{k,\R^d}$ for some appropriate
constant $C>0$. Since $F=\phi_\delta
* H$ and $H\restrict{B_a}\in \Pi_k(\R^d)$ for each $a\in \nodes$, it
follows from Lemma~\ref{poly repro} that $F(a)=H(a) =f(a)$ for all
$a \in \nodes$.\qed

\begin{thm}
\label{main}
    Let $\Omega \subset \R^d$ be a V-domain and let $w:\R^d \To \R$ be a
    measurable function satisfying (W0){\Ndash}(W12). Let $k+\mu-d/2>0$
    and $m\geq k$. For each
    $h>0$, let $\nodes_h$ be a finite,
    $\Pi_m(\R^d)$-unisolvent subset of $\Omega$ with fill-distance $h$. Assume also that there is a
    quantity $\rho>0$ such that the mesh-ratio of each $\nodes_h$ is
    bounded by $\rho$ for all $h>0$. For each mapping
    $f:\nodes_h\rightarrow \R$, let $S_m^h f$ be the
    minimal norm interpolant to $f$ on $\nodes_h$ from $\CMcal{Z}^m(\R^d)$.
    Then there exists a constant $C>0$, independent of
    $h$, such that for all $f \in \CMcal{Y}^k(\Omega)$,
    \begin{equation*}
        \norm{f-S^h_m f}_{L_2(\Omega)} \leq C h^{k-\lambda/2-d/2}\abs{f}_{k,\Omega},\qquad \mbox{as $h \To 0$}.
    \end{equation*}
\end{thm}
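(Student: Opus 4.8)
The plan is to replace the rough datum by a smooth interpolating surrogate living in the native space $\CMcal{Z}^m(\R^d)$, apply the classical $L_2$-error estimate there, and let the bounded mesh-ratio convert the resulting penalty into the correct power of $h$. First I would extend $f$ to all of $\R^d$: given $f\in\CMcal{Y}^k(\Omega)$, Theorem~\ref{vail norm exten thm 2} supplies $f^\Omega\in\CMcal{Y}^k(\R^d)$ with $f^\Omega=f$ on $\Omega$ and $\abs{f^\Omega}_{k,\R^d}\le C\abs{f}_{k,\Omega}$. Feeding $f^\Omega$ into Theorem~\ref{general result}, with $q$ the separation of $\nodes_h$, produces $F\in\CMcal{X}^m(\R^d)$ (so in particular $F$ lies in the native space $\CMcal{Z}^m(\R^d)$) such that $F(a)=f^\Omega(a)=f(a)$ for every $a\in\nodes_h$, together with
\begin{equation*}
\abs{F}_{k,\R^d}\le C\abs{f^\Omega}_{k,\R^d}\qquad\text{and}\qquad\abs{F}_{m,\R^d}\le C\,q^{k-m}\abs{f^\Omega}_{k,\R^d}.
\end{equation*}
The second bound is exactly where Lemma~\ref{seminorm convolution bound} is spent inside Theorem~\ref{general result}. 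Since $S^h_m$ acts only on the sampled values on $\nodes_h$, and $F$ and $f$ share those values, the two interpolants coincide: $S^h_m f=S^h_m F$.

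I would then split
\begin{equation*}
\norm{f-S^h_m f}_{L_2(\Omega)}\le\norm{f-F}_{L_2(\Omega)}+\norm{F-S^h_m F}_{L_2(\Omega)}
\end{equation*}
and bound both summands with the classical native-space estimate, which in the present weighted setting reads $\norm{g-S^h_j g}_{L_2(\Omega)}\le C h^{j-\lambda/2-d/2}\abs{g}_{j,\Omega}$ for $g\in\CMcal{Y}^j(\Omega)$ and $j\in\set{k,m}$. For the smooth summand I apply this at level $j=m$ to $g=F$, obtaining $\norm{F-S^h_m F}_{L_2(\Omega)}\le C h^{m-\lambda/2-d/2}\abs{F}_{m,\Omega}$, and then use $\abs{F}_{m,\Omega}\le\abs{F}_{m,\R^d}$, valid because (W5) makes the integrand in~\eqref{direct form} nonnegative, so shrinking the domain of integration can only decrease it. For the rough summand I note that $f-F$ vanishes on $\nodes_h$, so its minimal norm interpolant from $\CMcal{Z}^k(\R^d)$ is identically zero; applying the estimate at level $j=k$ to $g=f-F$ then gives $\norm{f-F}_{L_2(\Omega)}\le C h^{k-\lambda/2-d/2}\abs{f-F}_{k,\Omega}$, while $\abs{f-F}_{k,\Omega}\le\abs{f}_{k,\Omega}+\abs{F}_{k,\R^d}\le C\abs{f}_{k,\Omega}$. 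Finally the uniform mesh-ratio hypothesis yields $q\ge h/\rho$, and since $k-m\le0$ we get $q^{k-m}\le\rho^{m-k}h^{k-m}$; substituting into the smooth summand upgrades $h^{m-\lambda/2-d/2}\abs{F}_{m,\R^d}$ to $C h^{k-\lambda/2-d/2}\abs{f}_{k,\Omega}$, matching the rough summand, and collecting constants finishes the argument.

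The assembly is short, so the obstacle I expect to wrestle with is functional-analytic rather than computational. One must check that the passage from $f$ to $f^\Omega$ to $F$ respects the differing completions involved---Theorem~\ref{vail norm exten thm 2} lands in the seminorm completion $\CMcal{Y}$, while Theorem~\ref{general result} is stated in the norm completion $\CMcal{X}$---that the minimality inequality $\abs{F-S^h_m F}_{m,\R^d}\le\abs{F}_{m,\R^d}$ and the vanishing $S^h_k(f-F)\equiv0$ are legitimate for completed-space elements, and that the cited $L_2$-estimate genuinely applies to the non-smooth function $f-F$. I would defuse all of this by first establishing the inequality on the dense subclass $\set{f\restrict{\Omega}:\ f\in C^m_0(\R^d)}$, where extension, the construction of Theorem~\ref{general result}, and convolution are all unambiguous, and then passing to general $f\in\CMcal{Y}^k(\Omega)$ by continuity, invoking Lemma~\ref{norm equivalence} to control the point-evaluation data that $S^h_m$ reads in terms of the seminorm $\abs{\cdotc}_{k,\Omega}$.
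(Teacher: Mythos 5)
Your proposal is correct and follows essentially the same route as the paper: construct $F$ via Theorem~\ref{general result} so that $S^h_m f = S^h_m F$, split the error into $\norm{F-S^h_m F}$ plus $\norm{f-F}$ (the paper writes the latter as $\norm{G-S^h_k G}$ with $G=f-F$ and $S^h_k G=0$, which is the same thing), apply the known $L_2$-estimate at levels $m$ and $k$ respectively, and use the bounded mesh-ratio to trade $q^{k-m}$ for $h^{k-m}$. The paper merely reorders your final housekeeping: it proves the inequality first for $f\in\CMcal{X}^k(\R^d)$, passes to $\CMcal{Y}^k(\R^d)$ by the same density argument you anticipate, and only then invokes Theorem~\ref{vail norm exten thm 2} to descend to $f\in\CMcal{Y}^k(\Omega)$.
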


\proof Take $f\in \CMcal{X}^k(\R^d)$. Construct $F$ in accordance
with Theorem~\ref{general result} and set $G=f-F$. Then $F(a)=f(a)$
and $G(a)=0$ for all $a \in \nodes_h$. Furthermore, there is a
constant $C_1>0$, independent of $f$ and $h$, such that
\begin{equation}
\abs{F}_{m,\R^d} \leq C_1 \Bigl(\frac{h}{\rho}\Bigr)^{k-m}
\abs{f}_{k,\R^d},\qquad \abs{G}_{k,\R^d} \leq
\abs{f}_{k,\R^d}+\abs{F}_{k,\R^d} \leq
(1+C_1)\abs{f}_{k,\R^d}.\label{F G bounds}
\end{equation}
Thus $S_m^h f = S_m^h F$ and $S_k^h G = 0$, where we have adopted
the obvious notation for $S_k^h$. Hence,
\begin{equation*}
    \norm{f-S_m^h f}_{L_2(\Omega)} =
    \norm{(F+G)-S_m^hF}_{L_2(\Omega)} \leq \norm{F-S_m^hF}_{L_2(\Omega)} +
    \norm{G-S_k^hG}_{L_2(\Omega)}.
\end{equation*}
Now, employing the error estimate in \cite{brownlee2}, there are
positive constants $C_2>0$ and $C_3>0$, independent of $h$ and $f$,
such that
\begin{equation*}
    \norm{f-S_m^h f}_{L_2(\Omega)} \leq C_2 h^{m-\lambda/2-d/2}
    \abs{F}_{m,\Omega}
    + C_3 h^{k-\lambda/2-d/2} \abs{G}_{k,\Omega},\qquad \mbox{as $h\rightarrow 0$.}
\end{equation*}
Finally, using the bounds in~\eqref{F G bounds} we have
\begin{equation}\label{nearly done}
    \norm{f-S_m^h f}_{L_2(\Omega)} \leq C_4 h^{k-\lambda/2-d/2}
    \abs{f}_{k,\R^d},\qquad \mbox{as $h\rightarrow 0$,}
\end{equation}
for some appropriate $C_4>0$. In particular,~\eqref{nearly done}
holds for all $f \in X^k(\R^d)$. As $\CMcal{Y}^k(\R^d)$ is a dense
linear subspace of $X^k(\R^d)$ then~\eqref{nearly done} extends to
hold for all $f \in \CMcal{Y}^k(\R^d)$ using a standard normed space
argument \cite[Page 180]{jameson}. To complete the proof we now let
$f \in \CMcal{Y}^k(\Omega)$ and define $f^\Omega$ in accordance with
Theorem~\ref{vail norm exten thm 2}. It follows that there is a
$C_5>0$ such that
\begin{equation*}
\norm{f-S_m^h f}_{L_2(\Omega)} \leq C_4 h^{k-\lambda/2-d/2}
    \abs{f^\Omega}_{k,\R^d} \leq C_4 C_5 h^{k-\lambda/2-d/2}
    \abs{f}_{k,\Omega},\qquad \mbox{as $h\rightarrow 0$.} \qedhere
\end{equation*}

\section*{Acknowledgements} It is a pleasure to acknowledge
that this paper is a generalisation of joint work with Will Light.
The author would like to dedicate this paper to Will's memory.

\end{document}